\newcommand{\ROM}[1]{\mathrm{\uppercase\expandafter{\romannumeral#1}}}
\theoremstyle{definition}
\numberwithin{equation}{section}
\theoremstyle{plain}
\newtheorem{thm}{Theorem}[section]
\newtheorem{lem}{Lemma}[section]
\newtheorem{cor}{Corollary}[section]
\newtheorem{rem}{Remark}[section]
\newtheorem{prop}{Proposition}[section]
\newtheorem{ack}{Acknowledgements}   
\title[Non-extendability of complex structures
]{Non-extendability of complex structures}
\author[Z. Z. Tang]{Zizhou Tang}\address{Chern Institute of Mathematics $\&$ LPMC, Nankai University, Tianjin 300071, P. R. China}
\email{zztang@nankai.edu.cn}
\author[W. J. Yan]{Wenjiao Yan}\address{School of Mathematical Sciences, Laboratory of Mathematics and Complex Systems, Beijing Normal University, Beijing, 100875, P. R. China}
\email{wjyan@bnu.edu.cn}
\thanks {The project is partially supported by the NSFC (No. 12271038, 12371048), and the Nankai Zhide Foundation. 
}
\subjclass[2010] { 32G05, 32Q60, 53C15.}
\keywords{Almost complex structure, integrability, Yau's Problem 52.}
\begin{document}
	
	\maketitle
	\begin{abstract}
		There exists a complex structure $J$ on a connected open subset $S^3_{\delta}\times S^3$ of $S^6$. The present paper proves that: (1) 
		$J$ can be extended to a global almost complex structure $\widetilde{J}$ on $S^6$; (2) any extension to $S^6$ is necessarily non-integrable. Therefore, it is impossible to deform $\widetilde{J}$ to an integrable almost complex structure on $S^6$ while fixing it on $S^3_{\delta}\times S^3$. This phenomenon indicates that the deformation strategy suggested by S.-T. Yau in his Problem 52 cannot be realized in this sense.

	\end{abstract}
	
	\section{\textbf{Introduction}}\label{sec1}
	
	A real smooth manifold is called a complex manifold if it admits a complex structure with holomorphic coordinate charts. One of the most important problems in modern pure mathematics is the existence or non-existence of complex structures. While every orientable surface admits a complex structure, explicit examples in higher dimensions remain comparatively scarce. Classical examples include the Hopf manifolds, whose underlying smooth manifolds are $S^1 \times S^{2n+1}$; the Calabi-Eckmann manifolds (\cite{CE53}), whose underlying smooth manifolds are $S^{2p+1} \times S^{2q+1} $ $(p,q\geq 1)$; products of odd-dimensional Brieskorn homotopy spheres $\Sigma^{2p+1} \times \Sigma^{2q+1} (p,q\geq 1)$ (\cite{BV68}); twistor spaces of Taubes' self-dual $4$-manifolds; and complex projective manifolds, among others.

	Every complex manifold carries a canonical almost complex structure induced by local holomorphic coordinates,
    and is therefore an almost complex manifold. Recall that \textit{an almost complex structure}
	on a smooth manifold $M$ is an endomorphism
	$J: TM\rightarrow TM$ of the tangent bundle satisfying $J^2=-\mathrm{Id}$.
	Such a structure forces $M$ to be
	even-dimensional and oriented. A smooth manifold $M^{2n}$ admits an almost complex structure if and only if the structure group of its tangent bundle can be reduced from $O(2n)$ to $U(n)$. 
	An almost complex structure is called \textit{integrable} (or simply \emph{a}\emph{ complex structure}) if it is the canonical almost complex structure of a complex manifold.

	The converse question, whether every almost complex manifold admits an integrable almost complex structure, is substantially more subtle. In 1951, Steenrod conjectured in his seminal monograph \cite{Ste51}:
	``It seems highly unlikely that every almost complex manifold has a complex analytic structure." This intuition was confirmed in dimension $4$: in 1966, Van de Ven \cite{VdV66} constructed the first example of a compact almost complex $4$-manifold that admits no complex structures, leveraging Chern numbers and Milnor's results. 
	Yau \cite{Yau76} later constructed the first compact parallelizable $4$-manifold without any complex structure, using Massey products. 
	Historically, essentially all classical obstructions to the existence of complex structures occur in real dimension $4$, where the classification of compact complex surfaces is well established.

	In higher dimensions, determining whether an almost complex structure can be deformed to an integrable one remains extremely challenging. For open manifolds, however, powerful results are available. A theorem of Gromov \cite{Gro73}, refined by Landweber \cite{Lan74}, states:
	
	\vspace{2mm}
	\noindent
	\textbf{Theorem (Gromov-Landweber).}
	\emph{Let $V^{2n}$ be a $2n$-dimensional open manifold with $H^i(V, \mathbb{Z}) = 0$ for any $i > n$.
		Then every almost complex structure on $V$ is homotopic (deformed) to an integrable one.}
	\vspace{2mm}

	Combining Gromov's work in dimension $4$ with a result of Adachi \cite{Ada79} in dimension $6$ yields:
	\vspace{2mm}
	
	\noindent
	\textbf{Theorem (Gromov-Adachi).} \emph{Let $V^{2n}$ be an open manifold of dimension $2n\leq 6$. Then every almost complex structure on $V$ is homotopic (deformed) to an integrable one.}
	\vspace{2mm}

	For compact manifolds, no comparable criterion is known. A celebrated open problem is the Hopf problem, which asks whether $S^6$ admits a complex structure. The $6$-sphere $S^6$ carries a canonical almost complex structure arising from its realization as the unit sphere in the imaginary octonions $\text{Im} (\mathbb{O})$ (\cite{Kir47}). However, this almost complex structure is non-integrable~(\cite{EF51, EL51}).
	
	One of the most intriguing proposals to address this question is the Problem 52, posed by S.-T. Yau in \cite{SY94} (and on multiple subsequent occasions) as one of the most profound open problems in geometry:
	
	\vspace{2mm}
	
	\noindent
	\textbf{Yau's Problem 52.} \emph{Prove that every compact almost complex manifold with dimension $\geq 6$  admits an integrable almost complex structure. This question is still unsolved for the $6$-dimensional sphere. One approach is to form a parabolic flow in the space of almost complex structures to deform an almost complex structure to an integrable one}. 
\vspace{2mm}

Recent progress has emerged from isoparametric theory: the authors \cite{TY22}  explicitly constructed complex structures on certain isoparametric hypersurfaces in spheres of arbitrarily large dimensions, including explicit examples such as $S^1\times S^3\times S^2$ (different from the product of complex structures) and $S^1\times S^7\times S^6$. More recently, \cite{QTY25} found additional families of complex manifolds arising from isoparametric foliations in unit spheres,  further enriching the landscape of higher-dimensional complex manifolds.


Recall that Calabi and Eckmann \cite{CE53} constructed a family of complex manifolds homeomorphic to
$S^{2p+1}\times S^{2q+1}~(p, q\geq 1)$. For the purpose of this paper, we focus on the case $S^3\times S^3$. Let $\{e_1, e_2, e_3\}$ and $\{e_4, e_5, e_6\}$ be global orthonormal tangent frames on the first and second $S^3$ factors, respectively. Furthermore, we choose $e_1$ and $e_4$ to be tangent to the fibers of the standand Hopf fibration $S^1\hookrightarrow S^3\rightarrow S^2$ on each factor, respectively.
Consider the almost complex structure $J$ essentially defined by Calabi and Eckmann, which pairs the vertical directions $e_1$ with $e_4$:
\begin{equation*}
	\begin{aligned}
		&Je_1=e_4, &Je_2&=e_3, &Je_5&=e_6\\
		&Je_4=-e_1, &Je_3&=-e_2,& Je_6&=-e_5.
	\end{aligned}
\end{equation*}
That is,
\begin{equation}\label{J}
	J(e_1, e_2, e_3, e_4, e_5, e_6) = (e_1, e_2, e_3, e_4, e_5, e_6) J_0,
\end{equation}
where $J_0$ is the block matrix:
\begin{equation*}\label{J0}
	\small	J_0=\left(
	\begin{array}{cccc:cc}
		&   &   & -1 & &   \\
		&   & -1 &   &  &   \\
		& 1 &   &   & &   \\
		1 &   &   &   & &   \\
		\cdashline{1-6}
		&   &   &   &   & -1 \\
		&   &   &   & 1 &
	\end{array}
	\right).
\end{equation*}

For $\delta\in (0, 1)$, define an open subset $S^3_{\delta}\subset S^3$ by
\begin{equation}\label{S^3}
	S^3_{\delta}:=\{(x_1, x_2, x_3, x_4)\in S^3~|~x_1< \delta\}\subset S^3.
\end{equation}
The product $S^3_{\delta}\times S^3$ naturally inherits the Calabi-Eckmann complex structure defined in \eqref{J}, which we denote by the same symbol $J$.

Notably, $S^3_{\delta}\times S^3$ is homeomorphic to $\mathbb{R}^3\times S^3$. In what follows, we will embed $\mathbb{R}^3\times S^3$ into $S^6$ as an open subset.



As the main result of this paper, we prove:

\begin{thm}\label{thm}
	There exists a complex structure $J$ on the connected open subset $S^3_{\delta}\times S^3$ of $S^6$. 
	This structure $J$ admits an extension to a global almost complex structure $\widetilde{J}$ on $S^6$. However, any extension is non-integrable. Therefore, it is impossible to deform $\widetilde{J}$ to an integrable almost complex structure on $S^6$ while fixing it on $S^3_{\delta}\times S^3$.
\end{thm}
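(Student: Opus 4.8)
My plan is to treat the three assertions in turn, with the genuine difficulty concentrated in the last one.

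\emph{Existence of $J$ (assertion 1).} This is essentially given: the Calabi--Eckmann structure \eqref{J} is integrable on $S^3\times S^3$, so its restriction to the open subset $S^3_\delta\times S^3$ is a bona fide complex structure, and under the stated identification $S^3_\delta\times S^3\cong\mathbb{R}^3\times S^3\hookrightarrow S^6$ it becomes a complex structure on a connected open set $U\subset S^6$. I would record explicitly the $J$-invariant distributions $\langle e_1,e_4\rangle$, $\langle e_2,e_3\rangle$, $\langle e_5,e_6\rangle$, since the resulting holomorphic fibration $\pi\colon U\to\mathbb{CP}^1\times\mathbb{CP}^1$ with elliptic-curve fibers is what drives the last step. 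Writing $S^1_p$ for the first-factor Hopf fiber through $p$, the fiber of $\pi_1$ is the Hopf surface $\Sigma_p=S^1_p\times S^3$, and $\Sigma_p\subset U$ exactly when $x_1^2+x_2^2<\delta^2$.

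\emph{Existence of the extension $\widetilde J$ (assertion 2).} This is pure obstruction theory. After fixing the orientation of $S^6$ compatibly with $J$, extending an orientation-compatible almost complex structure amounts to extending a section of the bundle over $S^6$ with fiber $SO(6)/U(3)\cong\mathbb{CP}^3$ from $U$ to all of $S^6$. Realizing $U\cong\mathbb{R}^3\times S^3$ as the interior of a tubular neighborhood of an unknotted $S^3$, the splitting $S^6=\partial(D^4\times D^3)=(S^3\times D^3)\cup(D^4\times S^2)$ identifies the closed complement $C:=S^6\setminus U$ with $D^4\times S^2\simeq S^2$. The successive obstructions to extending the section over $C$ lie in $H^{k+1}(S^6,U;\pi_k(\mathbb{CP}^3))$; by Lefschetz duality $H^{j}(S^6,U)\cong H_{6-j}(C)\cong H_{6-j}(S^2)$, which is nonzero only for $j\in\{4,6\}$, while $\pi_k(\mathbb{CP}^3)$ is nonzero only for $k=2$ (equal to $\mathbb{Z}$) and $k=7$, using $\pi_k(\mathbb{CP}^3)\cong\pi_k(S^7)$ for $k\ge 3$. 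No $k$ makes both factors nonzero in degree $k+1\le 6$, so every obstruction vanishes and $J$ extends to a global $\widetilde J$.

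\emph{Non-integrability of every extension (assertion 3).} This is the heart, and I would argue by contradiction. Suppose some extension $\widetilde J$ is integrable, so that $X:=(S^6,\widetilde J)$ is a compact complex $3$-fold restricting on $U$ to the Calabi--Eckmann fibration. Then the elliptic fibers $E_b$ lying entirely in $U$ (those over the disk $D\subset\mathbb{CP}^1$ cut out by $x_1^2+x_2^2<\delta^2$) and the Hopf surfaces $\Sigma_p$, which are smooth compact complex \emph{divisors} of $X$, are genuine compact complex submanifolds of $X$. Fixing a Hermitian metric $g$ on $X$, I would establish a uniform local bound on the $g$-volume of these families as the parameter approaches the frontier $\mathrm{Fr}(U)$, so that Bishop's compactness theorem (equivalently, compactness of the relevant part of the Barlet cycle space $\mathcal{C}_1(X)$) yields limit cycles and organizes the family into a positive-dimensional compact analytic family filling an open subset of $X$. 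This amounts to a non-constant meromorphic fibration of $X$: organized via $\pi_1$, a fibration by Hopf surfaces over a curve, whose generic fiber has $\chi(S^1\times S^3)=0$. Resolving indeterminacy gives a holomorphic fibration of a blow-up $\widehat X$ of $S^6$ with fibers of vanishing Euler characteristic; tracking the contributions of the blow-ups and of any singular fibers, I would force $\chi(S^6)=0$, contradicting $\chi(S^6)=2$, with $H^2(S^6;\mathbb{Z})=0$ entering to trivialize the divisor/line-bundle classes and to exclude a Kähler escape.

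The hard part will be exactly this frontier analysis: securing the uniform volume bound and identifying the limiting cycles, i.e.\ showing that the Calabi--Eckmann elliptic and Hopf-surface families do not merely evaporate at $\mathrm{Fr}(U)$ but assemble into a global holomorphic structure whose numerical invariants collide with those of $S^6$ (controlling the singular-fiber and blow-up bookkeeping in the Euler-characteristic count is the delicate point). Once assertion 3 holds, the final claim is immediate: any integrable deformation of $\widetilde J$ fixing $J$ on $S^3_\delta\times S^3$ would be an integrable extension of $J$, which assertion 3 forbids; hence Yau's deformation strategy cannot be realized while the structure is held fixed on this open set.
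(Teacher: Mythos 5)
Your handling of assertions 1 and 2 is sound. For assertion 2 you take a genuinely different and in fact cleaner route than the paper: you run abstract obstruction theory for sections of the $SO(6)/U(3)\cong\mathbb{C}P^3$-bundle relative to $U$, and the Lefschetz-duality computation $H^{k+1}(S^6,U)\cong H_{5-k}(S^2)$ together with $\pi_3(\mathbb{C}P^3)=\pi_5(\mathbb{C}P^3)=0$ does kill every obstruction group. The paper instead works concretely: it writes the Calabi--Eckmann $J$ in the coordinate frame of the disc-bundle decomposition $S^6\cong S^2\times\mathbb{R}^4\sqcup_\varphi\mathbb{R}^3\times S^3$ as a matrix $G^{-1}J_0G$ and extends it across $S^2\times\{0\}$ by an explicit null-homotopy, using $\pi_3SO(4)\to\pi_3SO(6)$ and the relation $[\rho]=2[\sigma]$. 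Your argument buys brevity and conceptual clarity; the paper's buys an explicit extension and a concrete explanation (via the generator $D'$ of $\pi_3SU(2)$) of why the frame $G$ itself does not extend.

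Assertion 3 is where your proposal has a genuine gap. You correctly identify the key geometric input --- the infinite family of compact complex hypersurfaces $\mathcal{C}_{p}\times S^3$ (Hopf surfaces over standard Hopf circles with $x_1^2+x_2^2<\delta^2$) sitting inside $U$ --- which is exactly what the paper uses. But your mechanism for extracting a contradiction does not close. First, the passage from a positive-dimensional compact family of divisors to a global meromorphic fibration of $X$ is itself the substance of a nontrivial theorem, not a corollary of Bishop compactness. Second, and more seriously, the Euler-characteristic endgame fails as stated: for a fibration over a curve whose generic fiber $F$ has $\chi(F)=0$, the total space satisfies $\chi(\widehat X)=\sum_s\chi(F_s)$ over the singular fibers, and these contributions need not vanish (elliptically fibered K3 surfaces have $\chi=24$ with $\chi(F)=0$); the ``bookkeeping'' you defer is precisely the unproven core, and there is no apparent reason it forces $\chi(S^6)=0$. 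The paper avoids all of this by quoting two results: Campana--Demailly--Peternell (any compact complex manifold homeomorphic to $S^6$ has algebraic dimension $0$, i.e.\ no non-constant meromorphic functions) and Krasnov (such a manifold carries at most $n+h^{1,1}-h^{1,0}$ irreducible hypersurfaces), so the infinite family is an immediate contradiction. If you insist on your route, the correct way to finish is not via $\chi$ but to observe that a meromorphic fibration over a curve yields non-constant meromorphic functions on $X$ and then invoke Campana--Demailly--Peternell --- a theorem you will need in some form, since without it a positive algebraic dimension for a complex $S^6$ is not by itself absurd.
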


\begin{rem}\label{rem1.1}
	This theorem indicates that the deformation strategy suggested by S.-T.Yau in his Problem 52 cannot be realized in this sense. In fact, as will be detailed in Remark \ref{rem3.1}, one can replace $S^3_{\delta}\times S^3$ with a connected open subset of $S^6$ of arbitrarily small measure while maintaining the validity of Theorem \ref{thm}.
	
	Regarding the deformation theory, Lemma 6.1.2 in \cite{Huy05} identifies the integrability condition for a deformed almost complex structure with the Maurer-Cartan equation. Consequently, Theorem \ref{thm} implies that the associated deformation problem---governed by the Maurer-Cartan equation subject to the initial condition on $S^3_{\delta}\times S^3$---admits no solution.
	
\end{rem}


We emphasize that the extendability of $J$ to a global almost complex structure on $S^6$
in Theorem~\ref{thm} relies on obstruction-theoretic arguments, and such extendability is highly exceptional. For instance, 
consider $\mathbb{C}P^2$ with its standard complex structure (inducing the positive orientation). Let $D^4 \subset \mathbb{C}P^2$ be a smoothly embedded closed 4-disc and set $M := \mathbb{C}P^2 \setminus D^4.$ 
Clearly, the open manifold $M$ inherits a positive orientation from $\mathbb{C}P^2$. However, we will show 


\begin{prop}\label{prop}
	The open manifold $M = \mathbb{C}P^2 \setminus D^4$ admits an integrable almost complex structure $J$ which induces the negative orientation. Nevertheless, $J$ cannot be extended to a global almost complex structure on $\mathbb{C}P^2$.
\end{prop}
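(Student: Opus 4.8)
The plan is to treat the two assertions separately. For existence, I would first show that $M$ is homotopy equivalent to $\mathbb{CP}^1=S^2$ and then use pure obstruction theory to produce an almost complex structure inducing the \emph{negative} orientation, promoting it to an integrable one via the Gromov--Adachi theorem quoted above. For non-extendability, I would argue that any global extension would force $\mathbb{CP}^2$ to carry an almost complex structure inducing the negative orientation---equivalently, that $\overline{\mathbb{CP}^2}$ is almost complex---and then derive a contradiction from the classical characteristic-number obstruction for closed $4$-manifolds.

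\textbf{Existence.} Since any smoothly embedded closed $4$-disc in the connected manifold $\mathbb{CP}^2$ is ambiently isotopic to a standard ball, $M$ deformation retracts onto a hyperplane $\mathbb{CP}^1\subset\mathbb{CP}^2$; hence $M\simeq S^2$ and $H^i(M;\mathbb{Z})=0$ for all $i\geq 3$. An almost complex structure on $M$ compatible with the negative orientation $o_-$ is a section of the bundle over $M$ whose fibre is the space of orthogonal complex structures inducing $o_-$, namely $SO(4)/U(2)\cong S^2$. As this fibre is $1$-connected with $\pi_2\cong\pi_3\cong\mathbb{Z}$, the successive obstructions to a global section lie in $H^3(M;\pi_2 S^2)=H^3(M;\mathbb{Z})$ and $H^4(M;\pi_3 S^2)=H^4(M;\mathbb{Z})$, both of which vanish. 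Thus $M$ admits an almost complex structure inducing $o_-$, and since $M$ is open of dimension $4\leq 6$, the Gromov--Adachi theorem deforms it to an integrable $J$; the deformation preserves the induced orientation, so $J$ still induces $o_-$.

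\textbf{Non-extendability.} Suppose $J$ extended to a global almost complex structure $\widetilde{J}$ on $\mathbb{CP}^2$. The orientation induced by an almost complex structure is canonical and globally defined; it agrees with $o_-$ on the nonempty open set $M$, so by connectedness of $\mathbb{CP}^2$ it equals $o_-$ everywhere. Hence $\widetilde{J}$ would be an almost complex structure on the closed oriented manifold $\overline{\mathbb{CP}^2}$, and it remains to rule this out.

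\textbf{The obstacle.} The crux of the argument---and the reason the negative orientation is indispensable---is the non-existence of an almost complex structure on $\overline{\mathbb{CP}^2}$. By the classical criterion (Wu, Hirzebruch--Hopf) for closed oriented $4$-manifolds, such a structure would provide $c\in H^2(\overline{\mathbb{CP}^2};\mathbb{Z})$ with $c\equiv w_2\pmod 2$ and $c^2[\overline{\mathbb{CP}^2}]=2\chi+3\sigma=2\cdot 3+3\cdot(-1)=3$. But the intersection form of $\overline{\mathbb{CP}^2}$ is $(-1)$, so $c^2[\overline{\mathbb{CP}^2}]=-k^2\leq 0$ when $c=k\cdot(\text{generator})$, which can never equal $3$; this contradiction shows no extension exists. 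For contrast, with the positive orientation one finds $c=3h$ satisfying $c\equiv w_2\pmod 2$ and $c^2=9=2\cdot 3+3\cdot 1$, recovering the standard complex structure of $\mathbb{CP}^2$---so the phenomenon is genuinely orientation-dependent, and flagging the orientation bookkeeping (that the extension must induce $o_-$ globally) is the one place where care is required.
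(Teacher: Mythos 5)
Your proposal is correct, and its overall architecture (obstruction theory plus a Gromov-type integrability theorem for existence; the orientation argument reducing non-extendability to the non-existence of an almost complex structure on $\overline{\mathbb{C}P^2}$) coincides with the paper's. The differences are in the two supporting ingredients. First, to identify the homotopy type of $M$ you invoke the disc theorem (uniqueness of embedded $4$-discs up to ambient isotopy) and retract onto a hyperplane $\mathbb{C}P^1$, whereas the paper realizes $M$ as the total space of a rank-$2$ disc bundle over $S^2$ coming from an isoparametric foliation of $\mathbb{C}P^2$; both give $M\simeq S^2$, and yours is the more elementary and self-contained route, while the paper's fits its broader isoparametric theme. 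Second, and more substantively, for the final contradiction the paper cites the general theorem of Kahn and Tang--Zhang that $\overline{\mathbb{C}P^{2n}}$ admits no almost complex structure for all $n$, whereas you prove the needed case $n=1$ directly from the Wu/Hirzebruch--Hopf criterion: $c^2[\overline{\mathbb{C}P^2}]=2\chi+3\sigma=3$ is incompatible with the negative definite intersection form $\langle -1\rangle$. Your computation is correct ($\chi=3$, $\sigma=-1$, $c=k h$ forces $c^2=-k^2\le 0$) and makes the proof self-contained in dimension $4$; the cited theorem buys the statement in all dimensions $4n$ but at the cost of a much deeper input. Your explicit attention to why the deformation to an integrable structure and the hypothetical extension both preserve the negative orientation is a point the paper treats only implicitly, and is worth keeping.
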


The paper is organized as follows.
In Section 2, we identify
$S^3_{\delta} \times S^3$ with an open subset of $S^6$ and prove the extendability of $J$ to a global almost complex structure on $S^6$. In Section 3, we establish that no extension of $J$ can be integrable on $S^6$. Finally, Section 4 presents a proof of Proposition \ref{prop}.

\section{\textbf{Extendability of $J$ to a global almost complex structure on $S^6$}}

We begin with a decomposition of $S^6$ into disc bundles. This idea originates from the isoparametric theory in unit spheres.  
E. Cartan initiated the study of isoparametric hypersurfaces in the 1930s; he characterized 
isoparametric hypersurfaces in spheres as those with constant principal curvatures. Subsequently, M\"unzner proved that the number $g$ of distinct principal curvatures can only be
$1,2,3,4$, or $6$. 
Moreover, there are two focal submanifolds in each isoparametric family, which are minimal submanifolds of the sphere. Every isoparametric hypersurface divides the ambient sphere into two disc bundles of constant radii over the two focal submanifolds; the hypersurface itself is their common boundary (cf. \cite{Mun80}).


For $g=2$, an isoparametric hypersurface in $S^{n+1}$ is diffeomorphic to $S^p\times S^q$ with $p+q=n$, and its two focal submanifolds are precisely $S^p$ and $S^q$. Moreover, the normal bundles of the focal submanifolds are trivial. When $p=2$ and $q=3$, each isoparametric hypersurface in $S^6$ is diffeomorphic to $S^2\times S^3$, and the focal submanifolds are $S^2$ and $S^3$, respectively. The corresponding open disc bundles are diffeomorphic to $\mathbb{R}^4\times S^2$ and $\mathbb{R}^3\times S^3$. Consequently, $S^3_{\delta} \times S^3\cong \mathbb{R}^3\times S^3$ is diffeomorphic to an open subset of $S^6$. More precisely, the two open disc bundles $\mathbb{R}^4\times S^2$ and $\mathbb{R}^3\times S^3$ are embedded in $S^6$ via the maps
\begin{alignat*}{2}
	S^2 \times \mathbb{R}^4&\to S^6, &\qquad\quad  &\mathbb{R}^3 \times S^3 \to S^6 \\
	(x,~y) &\mapsto \frac{(x,y)}{\sqrt{1+|y|^2}},&\qquad\quad 	&\quad(z,~w) \mapsto \frac{(z,w)}{\sqrt{1+|z|^2}} .
\end{alignat*}
In the intersection of the two images, we have $\frac{(z,w)}{\sqrt{1+|z|^2}} = \frac{(x,y)}{\sqrt{1+|y|^2}}$, and a direct comparison gives
$z = \frac{x}{|y|}$ and $w = \frac{y}{|y|}$.  Thus we obtain the decomposition of $S^6$ into two disc bundles:
\begin{equation}\label{isop}
	S^6\cong S^2\times \mathbb{R}^4\sqcup_{\varphi} \mathbb{R}^3\times S^3,
\end{equation}
where the gluing diffeomorphism $\varphi$ acts on the common region by
\begin{equation}\label{varphi}
	\begin{aligned}
		\varphi: S^2\times \left(\mathbb{R}^4\backslash \{0\}\right)&\rightarrow \left(\mathbb{R}^3\backslash \{0\}\right)\times S^3\\
		(x, ~~y)\quad &\mapsto \quad (\frac{x}{|y|}, ~~\frac{y}{|y|}).
	\end{aligned}
\end{equation}

Recall that for $\delta\in (0, 1)$, $S^3_{\delta}:=\{(x_1, x_2, x_3, x_4)\in S^3~|~x_1< \delta\}$. Since $S^3_{\delta}\subset S^3\backslash \{N\}$, where $N=(1, 0, 0, 0)$ is the north pole, we may identify $S^3_{\delta}$ with an open disc $\mathbb{R}^3_{\delta}$ of radius $\sqrt{\frac{1 + \delta}{1 - \delta}}$ in $\mathbb{R}^3$ via the stereographic projection
\begin{equation}\label{stereo}
	\begin{aligned}
		\phi_N: \mathbb{R}^3 & \rightarrow S^3 \setminus \{N\}\\
		\ z & \mapsto u =\frac{1}{1 + |z|^2}(|z|^2-1, 2z).
	\end{aligned}
\end{equation}
Explicitly,
$$S^3_{\delta}\cong\mathbb{R}^3_{\delta} := \left\{ z \in \mathbb{R}^3 ~\middle|~ |z|^2 < \frac{1 + \delta}{1 - \delta} \right\} \subset \mathbb{R}^3.$$
Using the diffeomorphism $\varphi$ in \eqref{varphi}, we identify $\left(\mathbb{R}^3_{\delta}\backslash\{0\}\right)\times S^3$ with
$S^2\times \mathbb{R}^4_{\delta}$, where
$$\mathbb{R}_\delta^4 = \left\{ y \in \mathbb{R}^4 ~\middle|~ |y|^2 > \frac{1-\delta}{1+\delta} \right\},$$
the complement of a closed disc of radius $\sqrt{\frac{1-\delta}{1+\delta}}$ in $\mathbb{R}^4$. 

Via the diffeomorphism $\varphi$, the almost complex structure $J$ on $\left(S^3_{\delta}\backslash\{S\}\right)\times S^3$ can be transferred to $S^2\times \mathbb{R}^4_{\delta}$, where $S=(-1, 0,0,0)$ is the south pole of $S^3$; we keep the notation $J$ for the transferred structure. Consequently, extending the almost complex structure from $S^3_{\delta}\times S^3$ to $S^6$ reduces to extending $J$ from $S^2\times\mathbb{R}^4_{\delta}$ to $S^2\times\mathbb{R}^4$.


As the first step to extend the almost complex structure $J$ defined in \eqref{J}, we explicitly construct a global orthonormal tangent frame $\{e_1, \ldots, e_6\}$ of $S_\delta^3 \times S^3$ along with its local coordinate representation, as mentioned earlier in the introduction.

Identify $S^3\subset \mathbb{R}^4\cong\mathbb{H}$ with the unit quaternions. Let $\{\textbf{1, i, j, k}\}$ be the standard basis of $\mathbb{H}$. At $(u, v)\in S_\delta^3 \times S^3$ with $u=(u_1, u_2, u_3, u_4), v=(v_1, v_2, v_3, v_4)$, we choose $\{e_1, \ldots, e_6\}$ as follows  (where we denote the zero vector in $\mathbb{R}^4$ by $\textbf{0}=(0,0,0,0)$):
$$
\begin{aligned}
	e_1&=(\text{\textbf{i}}u, \textbf{0})=\Big( (-u_2, u_1, -u_4, u_3), \textbf{0} \Big), &e_2=(\text{\textbf{j}}u, \textbf{0})= \Big( (-u_3, u_4, u_1, -u_2), \textbf{0}\Big),\\
	e_3&=(\text{\textbf{k}}u, \textbf{0})=\Big( (-u_4, -u_3, u_2, u_1), \textbf{0}\Big),
	&e_4=(\textbf{0}, \text{\textbf{i}}v)=\Big(\textbf{0}, ~~(-v_2, v_1, -v_4, v_3)\Big), \\
	e_5&=(\textbf{0}, \text{\textbf{j}}v)= \Big( \textbf{0}, ~~(-v_3, v_4, v_1, -v_2)\Big), &  e_6=(\textbf{0}, \text{\textbf{k}}v)=\Big( \textbf{0}, ~~(-v_4, -v_3, v_2, v_1)\Big).
\end{aligned}
$$

We now express $\{e_1,\ldots, e_6\}$ in suitable local coordinates.
Combining $\varphi$ with $\phi_N$, we obtain a parametrization
\begin{alignat}{3}\label{phi}
	S^2 \times \mathbb{R}_\delta^4 &\overset{\varphi}{\xrightarrow{\qquad}} &\left(\mathbb{R}_\delta^3 \backslash\{0\}\right)\times S^3 &\overset{(\phi_N, Id)}{\xrightarrow{\qquad}} \left(S_\delta^3\backslash\{S\}\right) \times S^3 \\
	\qquad (x, ~y)~~ &\quad\mapsto&\left( \frac{x}{|y|}, ~\frac{y}{|y|} \right) &\quad\mapsto \quad(u,v):= \left(\Big( \frac{1-|y|^2}{1+|y|^2},~ \frac{2|y|x}{1+|y|^2}\Big),~ \frac{y}{|y|} \right)\nonumber
\end{alignat}
We further parametrize $S^2$ by stereographic projection from the south pole,
\begin{equation}\label{phi_S}
	\begin{aligned}
		\phi_S:	~~\mathbb{R}^2\quad&\rightarrow \quad S^2\backslash\{S\}\\
		t=(t_1, t_2)~~&\mapsto ~~x= (x_1, x_2, x_3):=\left( \frac{1-|t|^2}{1+|t|^2},~~ \frac{2t}{1+|t|^2} \right).
	\end{aligned}
\end{equation}
Together with \eqref{phi}, we obtain a parametrization:
\begin{equation}\label{psi}
	\begin{aligned}
		\psi: ~~\mathbb{R}^2 \times \mathbb{R}_\delta^4 &\longrightarrow S_\delta^3 \times S^3 \\
		(t, ~y) &\mapsto (u, v),
	\end{aligned}	
\end{equation}
where $$u = \left( \frac{1-|y|^2}{1+|y|^2}, ~~\frac{2|y|}{1+|y|^2} \Big( \frac{1-|t|^2}{1+|t|^2}, \frac{2t}{1+|t|^2} \Big) \right) \quad\text{and}\quad v = \frac{y}{|y|}.$$

In these local coordinates, the orthonormal basis $\{e_1, \ldots, e_6\}$
is expressed as follows: 
$$
\begin{aligned}
	e_1&=\left(\frac{1}{1 + |y|^2}\left( -2|y|x_1, ~1-|y|^2,~-2|y| x_3,~2|y| x_2 \right), ~~\textbf{0}\right),\\
	e_2&=\left(\frac{1}{1 + |y|^2}\left( -2|y| x_2, ~2|y| x_3, ~1-|y|^2, ~-2|y| x_1 \right), ~~\textbf{0}\right),\\
	e_3&=\left( \frac{1}{1 + |y|^2}\left( -2|y| x_3, ~-2|y| x_2, ~2|y| x_1, ~1- |y|^2  \right), ~~\textbf{0}\right),\\
	e_4&= \left( \textbf{0}, ~~\frac{1}{|y|}(-y_2, y_1, -y_4, y_3)\right),\\
	e_5&=\left( \textbf{0},~~\frac{1}{|y|}(-y_3, y_4, y_1, -y_2)\right),\\
	e_6&= \left( \textbf{0}, ~~ \frac{1}{|y|}(-y_4, -y_3, y_2, y_1)\right),
\end{aligned}
$$
with $x=(x_1, x_2, x_3)$ given by \eqref{phi_S}, $y=(y_1, y_2, y_3, y_4)\in \mathbb{R}^4_{\delta}$.

On the other hand, the natural coordinate vector fields $\{\frac{\partial}{\partial t_1}, \frac{\partial}{\partial t_2}, \frac{\partial}{\partial y_1}, $ $\frac{\partial}{\partial y_2}, \frac{\partial}{\partial y_3}, \frac{\partial}{\partial y_4}\}$ of $\left(S_\delta^3\backslash\{S\}\right) \times S^3$ at $(u, v)=\psi(t, y)$ can be computed explicitly:
\begin{equation*}\label{natural basis}
	\begin{aligned}
		\frac{\partial}{\partial t_1} = \psi_{*} \left( \frac{\partial}{\partial t_1} \right) 
		&= \left( \Big(0, \frac{4|y|}{(1+|y|^2)(1+|t|^2)^2}\left( -2t_1,1+t_2^2-t_1^2, -2t_1t_2\right)\Big), ~~\textbf{0} ~~~\right),\\
		\frac{\partial}{\partial t_2} = \psi_{*} \left( \frac{\partial}{\partial t_2} \right)&=\left( \Big(0, \frac{4|y|}{(1+|y|^2)(1+|t|^2)^2}\left( -2t_2,~ -2t_1t_2,~1+t_1^2-t_1^2\right)\Big), ~~\textbf{0}~~~\right),\\
		\frac{\partial}{\partial y_1} =\psi_{*} \left( \frac{\partial}{\partial y_1} \right) &= \left( \frac{2y_1}{(1 + |y|^2)^2}\Big(-2, \frac{1 - |y|^2}{|y|} x \Big), ~~\frac{1}{|y|^3} \Big(|y|^2 - y_1^2, -y_1 y_2, -y_1 y_3, -y_1 y_4\Big) \right),\\
		\frac{\partial}{\partial y_2} =\psi_{*} \left( \frac{\partial}{\partial y_2} \right) &= \left(\frac{2y_2}{(1 + |y|^2)^2}\Big(-2, \frac{1 - |y|^2}{|y| } x \Big),~~ \frac{1}{|y|^3} \Big(-y_1 y_2, |y|^2 - y_2^2, -y_2 y_3, -y_2 y_4\Big) \right),\\
		\frac{\partial}{\partial y_3} =\psi_{*} \left( \frac{\partial}{\partial y_3} \right) &= \left(\frac{2y_3}{(1 + |y|^2)^2}\Big(-2, \frac{1 - |y|^2}{|y| } x \Big),~~ \frac{1}{|y|^3} \Big(-y_1 y_3, -y_2 y_3, |y|^2 - y_3^2, -y_3 y_4\Big) \right),\\
		\frac{\partial}{\partial y_4} =\psi_{*} \left( \frac{\partial}{\partial y_4} \right) &=\left(\frac{2y_4}{(1 + |y|^2)^2}\Big(-2, \frac{1 - |y|^2}{|y| } x \Big),~~ \frac{1}{|y|^3}\Big(-y_1 y_4, -y_2 y_4, -y_3 y_4, |y|^2 - y_4^2\Big) \right),
	\end{aligned}
\end{equation*}
where again $x$ is understood via \eqref{phi_S}.

Expressing the coordinate vector fields $\{ \frac{\partial}{\partial t_1} , \frac{\partial}{\partial t_2}, \frac{\partial}{\partial y_1},\ldots, \frac{\partial}{\partial y_4} \}$ in terms of the orthonormal frame $\{e_1,\ldots, e_6\}$, we write
\begin{equation}\label{G}
	( \frac{\partial}{\partial t_1} , \frac{\partial}{\partial t_2}, \frac{\partial}{\partial y_1},\frac{\partial}{\partial y_2}, \frac{\partial}{\partial y_3}, \frac{\partial}{\partial y_4}) = (e_1, e_2, e_3, e_4, e_5, e_6) G,
\end{equation}
where $G$ is a non-singular matrix of block form whose entries are functions:
\begin{equation}\label{G}
	G = \begin{pmatrix} A_{3\times 2} & B_{3\times 4} \\ \textbf{0}_{3\times 2} & C_{3\times 4} \end{pmatrix}.
\end{equation}
The entries of $A, B, C$ are given explicitly by
$$
A = \frac{4|y|}{(1+|y|^2)^2 (1+|t|^2)^2}\left( (1-|y|^2)\small\begin{pmatrix}
	-2t_1& -2t_2\\
	1-t_1^2+t_2^2& -2t_1t_2 \\
	-2t_1t_2 &	1+t_1^2 -t_2^2
\end{pmatrix} + 2|y| \small\begin{pmatrix}
	-2t_2 & 2t_1 \\
	-2t_1t_2 & -1+t_1^2-t_2^2 \\
	1+t_1^2-t_2^2& 2t_1t_2
\end{pmatrix}\right),
$$
$$
\begin{aligned}
	B& = \frac{2}{|y| (1+|y|^2)(1+|t|^2)} \begin{pmatrix}
		1-|t|^2 \\
		2t_1 \\
		2t_2
	\end{pmatrix} \Big(y_1, y_2, y_3, y_4\Big),\\C &= \frac{1}{|y|^2} \begin{pmatrix} \text{\textbf{i}}y \\ \text{\textbf{j}}y \\ \text{\textbf{k}}y \end{pmatrix}= \frac{1}{|y|^2} \begin{pmatrix}
		-y_2 & y_1 & -y_4 & y_3 \\
		-y_3 & y_4 & y_1 & -y_2 \\
		-y_4 & -y_3 & y_2 & y_1
	\end{pmatrix}.
\end{aligned}$$
A straightforward computation shows
			$$G^t G = \begin{pmatrix}
				A^t A & 0 \\
				0 & B^t B + C^t C
			\end{pmatrix} = \begin{pmatrix}
				\frac{16|y|^2}{(1+|y|^2)^2 (1+|t|^2)^2} I_2 & 0 \\
				0 & \frac{1}{|y|^4} \left( |y|^2 I_4 - \frac{(1-|y|^2)^2}{(1+|y|^2)^2} y^t y \right)
			\end{pmatrix},$$
			where $G^t$ stands for the transpose of $G$, $I_2$ and $I_4$ are identity matrices of order $2$ and $4$, respectively.
			Consequently, 
			$$\det (G^tG)=\frac{32^2}{|y|^2(1+|y|^2)^6 (1+|t|^2)^4}>0\quad \text{whenever}~ |y|>0.$$
			Evaluating at the point $p=\psi(\textbf{0}, (1, 0, 0, 0))$ shows $\det G|_p=4
			$. By connectedness of the domain, $\det G>0$ everywhere on $S^3_{\delta}\times S^3$. Hence the frames $\{\frac{\partial}{\partial t_1} , \frac{\partial}{\partial t_2}, \frac{\partial}{\partial y_1},\ldots, \frac{\partial}{\partial y_4} \}$ and $\{e_1,\ldots, e_6\}$ define the same orientation.

			Next, using the matrix $G$ introduced above, we express the almost complex structure $J$ from \eqref{J} with respect to the coordinate frame $\{ \frac{\partial}{\partial t_1} , \frac{\partial}{\partial t_2}, \frac{\partial}{\partial y_1},\ldots, \frac{\partial}{\partial y_4}\}$ by
			\begin{equation}\label{GJ0G}
				J(\frac{\partial}{\partial t_1} , \frac{\partial}{\partial t_2}, \frac{\partial}{\partial y_1},\frac{\partial}{\partial y_2},\frac{\partial}{\partial y_3},\frac{\partial}{\partial y_4} ) = (\frac{\partial}{\partial t_1} , \frac{\partial}{\partial t_2}, \frac{\partial}{\partial y_1},\frac{\partial}{\partial y_2},\frac{\partial}{\partial y_3},\frac{\partial}{\partial y_4})  G^{-1}J_0 G.
			\end{equation}
			Thus extending the almost complex structure $J$ from $S^2\times\mathbb{R}^4_{\delta}$ to $S^2\times\mathbb{R}^4$ is equivalent to extending the function matrix $G^{-1}J_0 G$. To accomplish this, we prepare several lemmas in the following.

			The block form of $G$ makes direct computation complicated. To simplify the analysis, we first replace
			$J_0$ with a homotopic almost complex structure in a more convenient form.
			\begin{lem}\label{K}
				$J_0$ is homotopic to $K:=
				\small
				\begin{pmatrix}
					0  & 1 &     &    &     &  \\
					-1 & 0 &    &    &      &  \\
					&    &     &    & -1 & 0 \\
					&    &     &    & 0  & -1 \\
					&    &  1 & 0 &     &  \\
					&   &  0 & 1  &     &
				\end{pmatrix}$ as almost complex structures on $\mathbb{R}^6$. 
			\end{lem}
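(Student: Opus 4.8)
The plan is to invoke the standard structure of the space of linear complex structures rather than to manipulate the $6\times 6$ matrices directly. Write $\mathcal{J}(\R^{6})=\{J\in GL(6,\R):J^{2}=-\mathrm{Id}\}$ for the set of all linear almost complex structures on $\R^{6}$. Since $GL(6,\R)$ acts transitively on $\mathcal{J}(\R^{6})$ by conjugation, with the stabilizer of a fixed structure equal to $GL(3,\mathbb{C})$, we get a homeomorphism $\mathcal{J}(\R^{6})\cong GL(6,\R)/GL(3,\mathbb{C})$. Because $GL(3,\mathbb{C})$ is connected and a $\mathbb{C}$-linear automorphism has positive real determinant, $GL(3,\mathbb{C})$ lies in the identity component $GL^{+}(6,\R)$; consequently $\mathcal{J}(\R^{6})$ has exactly two path components, and conjugation by $GL^{+}(6,\R)$ preserves them. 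These two components are precisely distinguished by the orientation of $\R^{6}$ that a complex structure induces. Hence $J_{0}$ and $K$ are homotopic as almost complex structures if and only if they induce the same orientation on $\R^{6}$, and the problem reduces to a single sign computation.

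The next step is to compute the two induced orientations, using the standard recipe that for $J\in\mathcal{J}(\R^{6})$ and any $\mathbb{C}$-basis $v_{1},v_{2},v_{3}$ of $(\R^{6},J)$ the ordered $\R$-basis $(v_{1},Jv_{1},v_{2},Jv_{2},v_{3},Jv_{3})$ is positively oriented for the orientation $o_{J}$, independently of the choice. Let $f_{1},\dots,f_{6}$ denote the standard basis of $\R^{6}$. For $J_{0}$, taking $v_{1}=f_{1},v_{2}=f_{2},v_{3}=f_{5}$ gives the ordered basis $(f_{1},f_{4},f_{2},f_{3},f_{5},f_{6})$, which is an even permutation of $(f_{1},\dots,f_{6})$, so $J_{0}$ induces the standard orientation. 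For $K$, taking $v_{1}=f_{1},v_{2}=f_{3},v_{3}=f_{4}$ gives $(f_{1},-f_{2},f_{3},f_{5},f_{4},f_{6})$, whose transition matrix to the standard basis is a signed permutation matrix with a single $-1$ entry and underlying transposition $(4\,5)$, hence of determinant $(-1)\cdot(-1)=+1$; thus $K$ also induces the standard orientation. Since the two orientations coincide, $J_{0}$ and $K$ lie in the same path component of $\mathcal{J}(\R^{6})$ and are therefore homotopic. (As a cross-check, both matrices are skew-symmetric with the same Pfaffian sign, which is the same invariant.)

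If one prefers an explicit homotopy, the final step is to realize it concretely: both $J_{0}$ and $K$ are skew-symmetric and orthogonal, hence orthogonal complex structures, and having equal induced orientation they are conjugate by some $P\in SO(6)$ with $PJ_{0}P^{-1}=K$; choosing any path $P_{t}$ in the connected group $SO(6)$ from $\mathrm{Id}$ to $P$ yields the homotopy $t\mapsto P_{t}J_{0}P_{t}^{-1}$ of almost complex structures, running from $J_{0}$ at $t=0$ to $K$ at $t=1$. I do not expect any genuine difficulty in this lemma: the only point requiring care is to fix the orientation convention once and for all and then to track the signs in the two determinant computations, which is purely routine bookkeeping.
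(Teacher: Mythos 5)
Your proof is correct, but it takes a genuinely different route from the paper's. The paper argues by bare hands: it writes down an explicit path $E(s)$ in $GL(6,\mathbb{R})$ with $E(0)=I_6$, checks $\det E(s)>0$ for all $s$, and verifies by direct matrix multiplication that $E(1)^{-1}J_0E(1)=K$, so that $s\mapsto E(s)^{-1}J_0E(s)$ is the desired homotopy. You instead invoke the identification of the space of linear complex structures with $GL(6,\mathbb{R})/GL(3,\mathbb{C})$, note that its two path components are classified by the induced orientation, and thereby reduce the lemma to checking that $J_0$ and $K$ orient $\mathbb{R}^6$ the same way. Your two sign computations are correct: $(f_1,f_4,f_2,f_3,f_5,f_6)$ is the even permutation $(2\,4\,3)$ of the standard basis, and $(f_1,-f_2,f_3,f_5,f_4,f_6)$ has transition determinant $(-1)\cdot(-1)=+1$; the Pfaffian cross-check also gives $-1$ for both matrices. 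Your argument is more conceptual and in fact explains the paper's remark following the lemma --- that negating the upper-left $2\times 2$ block of $K$ would make it non-homotopic to $J_0$ --- since that operation flips exactly the orientation invariant you isolate. What the paper's computation buys is self-containedness (no appeal to the component structure of $GL(2n,\mathbb{R})/GL(n,\mathbb{C})$) and an explicit conjugating path, though nothing downstream actually requires the path to be explicit: Corollary \ref{J0K} only uses the existence of a homotopy. Either proof is acceptable.
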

			\begin{proof}
				
				For $s\in[0, 1]$, define a path of matrices by
				$$E(s) = \small\begin{pmatrix}
					1& &  &  &  &  \\
					&\cos (\frac{\pi}{2}s) & \sin (\frac{\pi}{2}s) & 0 & 0 &   \\
					&0& \cos (\frac{\pi}{2}s) & 0 &   \sin (\frac{\pi}{2}s) & \\
					& - \sin (\frac{\pi}{2}s) & 0 & \cos (\frac{\pi}{2}s) & 0 &  \\
					& 0 & 0 & \sin (\frac{\pi}{2}s) &\cos (\frac{\pi}{2}s) &   \\
					&  &  &  &  & 1					
				\end{pmatrix}.$$
				Clearly, $E(0) = I_6$, and
				$$E(1) = \begin{pmatrix}
					1 &  &  &  &  &  \\
					& 0 & 1 & 0 & 0 &  \\
					& 0 &0 & 0 & 1 & \\
					& -1 & 0 & 0 & 0 &  \\
					& 0 & 0 & 1 & 0 &  \\
					&  &  &  &  & 1
				\end{pmatrix}.$$				
				Since $\det E(s) = \cos^4 (\frac{\pi}{2}s) + \sin^4 (\frac{\pi}{2}s) > 0$ for every $s$, each $E(s)$ is invertible.
				A direct calculation gives
				$$E(0)^{-1} J_0 E(0) = J_0,\quad E(1)^{-1} J_0 E(1) = K.$$
				Hence $s\mapsto E(s)^{-1}J_0E(s)$ is a smooth family of almost complex structures connecting $J_0$ to $K$, i.e., $J_0\simeq K$.
			\end{proof}				
						
				We remark that the upper-left $2\times 2$ block of $K$ cannot be multiplied by $-1$, as doing so would reverse the orientation relative to $J_0$, and thus the resulting structure cannot be homotopic to $J_0$.

			\begin{cor}\label{J0K}
				$G^{-1} J_0 G$ is homotopic to $G^{-1} K G$ as almost complex structures on $S^2\times \mathbb{R}^4_{\delta}$ as defined in \eqref{GJ0G}.
			\end{cor}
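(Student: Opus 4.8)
The plan is to transport the homotopy constructed in Lemma~\ref{K} through the fixed frame change $G$. The key observation is that $G$ is a smooth, everywhere non-singular matrix-valued function on $S^2\times\mathbb{R}^4_\delta$ (we already verified $\det G>0$ on the entire domain, so $G^{-1}$ is smooth as well), and conjugation by such a $G$ carries a homotopy of pointwise almost complex structures to another homotopy of the same kind. Thus the corollary should follow functorially, without any new construction beyond Lemma~\ref{K}.

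Concretely, I would let $E(s)$, $s\in[0,1]$, denote the path of invertible matrices from the proof of Lemma~\ref{K}, so that $J(s):=E(s)^{-1}J_0E(s)$ is a smooth family of almost complex structures on $\mathbb{R}^6$ with $J(0)=J_0$ and $J(1)=K$. I would then define
\begin{equation*}
  \Phi(s):=G^{-1}J(s)\,G=G^{-1}E(s)^{-1}J_0E(s)\,G,\qquad s\in[0,1],
\end{equation*}
regarded as a family of endomorphisms of $T(S^2\times\mathbb{R}^4_\delta)$ expressed in the coordinate frame $\{\frac{\partial}{\partial t_1},\dots,\frac{\partial}{\partial y_4}\}$ exactly as in \eqref{GJ0G}.

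Three points then remain, each of which is immediate. First, the endpoints match: $\Phi(0)=G^{-1}J_0G$ and $\Phi(1)=G^{-1}KG$, using the values $E(0)=I_6$ and $E(1)^{-1}J_0E(1)=K$ recorded in Lemma~\ref{K}. Second, each $\Phi(s)$ is genuinely an almost complex structure: since $J(s)^2=-\mathrm{Id}$ for every $s$, conjugation yields $\Phi(s)^2=G^{-1}J(s)^2G=-\mathrm{Id}$. Third, smoothness of $s\mapsto\Phi(s)$ in both $s$ and the base point follows from the smoothness of $E(s)$ in $s$ together with that of $G$ and $G^{-1}$. Hence $s\mapsto\Phi(s)$ is a smooth path of almost complex structures on $S^2\times\mathbb{R}^4_\delta$ joining $G^{-1}J_0G$ to $G^{-1}KG$, which is precisely the asserted homotopy.

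I do not expect a genuine obstacle here; the only conceptual point is the functoriality of conjugation. Because the defining relation $X^2=-\mathrm{Id}$ is preserved under conjugation by the fixed invertible $G$, the fiberwise homotopy on $\mathbb{R}^6$ descends verbatim to a homotopy of almost complex structures on the manifold, and no estimate or additional analysis beyond Lemma~\ref{K} is needed.
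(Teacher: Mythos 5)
Your proof is correct and is exactly the argument the paper intends: the corollary is stated without proof as an immediate consequence of Lemma~\ref{K}, obtained by conjugating the path $s\mapsto E(s)^{-1}J_0E(s)$ by the fixed non-singular frame-change matrix $G$. Writing out the endpoint check, the verification that $\Phi(s)^2=-\mathrm{Id}$, and the smoothness is a faithful (if more explicit) rendering of the same reasoning.
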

			
			The following lemma regarding the commutativity of homotopy groups is a standard result derived from
			\cite{Ste51} (15.3, 16.7, 16.9).
			
			\begin{lem}\label{top gp G}
				Let $Q$ be a connected topological group. For $f,g : S^n \to Q$,
				define $f*g : S^n \to Q$ by $x \mapsto f(x)g(x)$.
				Then $f*g \simeq g*f : S^n \to Q$.
			\end{lem}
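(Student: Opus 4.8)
The plan is to prove Lemma~\ref{top gp G} by an Eckmann--Hilton argument: I will equip the set of homotopy classes of maps $S^n \to Q$ with two compatible binary operations, and exploit the interchange law between them to force commutativity of the pointwise product. Throughout I assume $n \geq 1$ (the case $n=0$ is immediate, since $Q$ is connected, so every map $S^0 \to Q$ is null-homotopic and the statement is trivial).

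First I would reduce to \emph{based} maps. Let $e \in Q$ be the identity and fix a basepoint $* \in S^n$. Given $f$, pick a path $\gamma$ in $Q$ with $\gamma(0)=e$ and $\gamma(1)=f(*)^{-1}$, which exists because $Q$ is path-connected, and set $F(x,t)=f(x)\gamma(t)$. Then $F$ is a free homotopy from $f$ to the map $f_0 := f\cdot f(*)^{-1}$, which satisfies $f_0(*)=e$. Applying the same right-translation to $g$, and using that the pointwise product $x\mapsto f(x)g(x)$ respects homotopies in each factor, it suffices to prove $f_0*g_0 \simeq g_0*f_0$ for based maps $f_0,g_0:(S^n,*)\to(Q,e)$. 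In other words, the problem is reduced to showing that the pointwise product on $\pi_n(Q)=[(S^n,*),(Q,e)]$ is commutative.

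On $\pi_n(Q)$ I then introduce two operations sharing the common unit $[c_e]$, the class of the constant map at $e$. The first is the standard homotopy-group addition $+$, coming from the pinch comultiplication $c:S^n\to S^n\vee S^n$ that is available because $S^n=\Sigma S^{n-1}$ is a suspension; the second is the pointwise product $\cdot$ induced by the multiplication of $Q$. The crucial compatibility is the interchange law $([a]+[b])\cdot([c]+[d])=([a]\cdot[c])+([b]\cdot[d])$. To see it, represent $+$ by maps that equal $a$ (respectively $b$) on the two hemispheres determined by the pinch map; since both summands send the collapsed equator to $e$, this is well defined, and pointwise multiplication then acts hemisphere-by-hemisphere, producing exactly $a\cdot c$ on the first half and $b\cdot d$ on the second, which is precisely the representative of $([a]\cdot[c])+([b]\cdot[d])$. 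With a common unit and this interchange law in place, the Eckmann--Hilton lemma---two unital binary operations on a set satisfying the interchange law coincide and are both commutative and associative---yields that $\cdot$ is commutative. This gives $[f_0*g_0]=[g_0*f_0]$ and hence $f*g\simeq g*f$, as claimed.

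I expect the only real subtlety to be bookkeeping rather than conceptual. The two points that require care are that the reduction to based maps is legitimate---which is exactly where connectedness of $Q$ enters---and that the two operations genuinely share the same strict unit, so that the Eckmann--Hilton lemma applies verbatim. The verification of the interchange law itself is routine once the pinch map and the pointwise product are written out on the two hemispheres, and the whole argument is the content of the cited sections (15.3, 16.7, 16.9) of \cite{Ste51}.
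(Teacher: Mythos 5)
Your proof is correct: the reduction to based maps via right translation uses the connectedness of $Q$ exactly where it is needed, and the Eckmann--Hilton interchange argument between the pinch-map addition and the pointwise product is the standard way to establish homotopy-commutativity of the pointwise product for maps into a topological group. The paper itself gives no proof, only a citation to Steenrod (15.3, 16.7, 16.9), and your argument is precisely the standard one underlying that reference, so there is nothing to compare beyond noting that you have filled in the details the paper omits.
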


			In particular, taking $Q=GL_+(m, \mathbb{R})$, the general linear group of matrices with positive determinant, we obtain
			
			\begin{cor}\label{ABsimeqBA}
				$AB \simeq BA$ for $A, B: S^n\to GL_+(m, \mathbb{R})$.
			\end{cor}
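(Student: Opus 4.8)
The plan is to obtain this corollary as a direct specialization of Lemma \ref{top gp G}, so the work reduces to checking that $Q = GL_+(m,\mathbb{R})$ satisfies the two hypotheses of that lemma. First I would observe that $GL_+(m,\mathbb{R})$ is a topological group: it is an open subset of the space of $m\times m$ real matrices, matrix multiplication is continuous (being polynomial in the entries), and inversion is continuous by Cramer's rule. Moreover, the binary operation appearing in Lemma \ref{top gp G}, namely the pointwise product $(f*g)(x) = f(x)g(x)$, is \emph{precisely} the pointwise matrix product $(AB)(x) = A(x)B(x)$, so the maps $A, B \colon S^n \to GL_+(m,\mathbb{R})$ fit the setup of the lemma verbatim with $f = A$ and $g = B$.

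The second, and essentially only substantive, point is the connectedness of $GL_+(m,\mathbb{R})$. I would invoke the classical fact that $GL(m,\mathbb{R})$ has exactly two path-components, distinguished by the sign of the determinant, whence $GL_+(m,\mathbb{R})$ is (path-)connected. A clean way to see this is through the polar decomposition $M = OP$ with $O \in SO(m)$ and $P$ symmetric positive definite: the path $s \mapsto O\,P^{s}$ for $s \in [0,1]$ deforms $M$ to $O$ while staying in $GL_+(m,\mathbb{R})$, and $SO(m)$ is itself connected, so any $M$ with $\det M > 0$ is joined to $I_m$ inside $GL_+(m,\mathbb{R})$.

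With both hypotheses verified, Lemma \ref{top gp G} applies and yields $AB = A*B \simeq B*A = BA$ as maps $S^n \to GL_+(m,\mathbb{R})$, which is exactly the assertion. I do not anticipate any genuine obstacle: the statement is a straightforward specialization of the preceding lemma, and the only ingredient requiring independent (and entirely routine) justification is the connectedness of $GL_+(m,\mathbb{R})$, which is standard. The payoff, of course, is downstream—this commutativity up to homotopy is the tool that will let us freely reorder matrix factors such as $G^{-1}$, $K$, and $G$ when analyzing the extendability of $G^{-1}KG$.
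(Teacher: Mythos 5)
Your proposal is correct and follows exactly the paper's route: the paper obtains the corollary by simply taking $Q = GL_+(m,\mathbb{R})$ in Lemma \ref{top gp G}, implicitly relying on the (standard) connectedness of $GL_+(m,\mathbb{R})$ that you spell out via polar decomposition. No substantive difference.
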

			Since the matrix $P = \text{diag}(1, -1, 1, 1)$ has $\det P=-1<0$, we cannot apply Corollary \ref{ABsimeqBA} directly. However, we still have 
			\begin{lem}\label{PfP}
				Let $f : S^3 \to SO(4)$ be defined by $y \mapsto (y^t, (\mathbf{i}y)^t, (\mathbf{j}y)^t, (\mathbf{k}y)^t)$, and let	$P = \text{diag}(1, -1, 1, 1)$. Then the homotopy class $[P^{-1}fP]$ is equal to $[f]$ in $\pi_3SO(6)$, where we employ the homomorphism $\pi_3SO(4) \rightarrow \pi_3SO(6)$ induced by the standard inclusion $SO(4) \hookrightarrow SO(6)$.
			\end{lem}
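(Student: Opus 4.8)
The plan is to exploit the two extra coordinates available in $SO(6)$ in order to convert the orientation-reversing conjugation by $P$ into an orientation-preserving one, and then to apply the commutativity Corollary \ref{ABsimeqBA}. First I would record the elementary facts about $P$: since $P^2 = I_4$ we have $P^{-1} = P$, and $\det P = -1$. The latter is precisely why Corollary \ref{ABsimeqBA} cannot be invoked directly: one would like to write $P^{-1}fP$ as the pointwise product of the constant map $y\mapsto P^{-1}$ with the map $y\mapsto f(y)P$, commute the two factors, and land on $f$; but the constant matrix $P$ does not lie in $GL_+$, so the hypothesis of Corollary \ref{ABsimeqBA} fails.

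Next I would pass to $SO(6)$. Denote by $\tilde f$ and $\tilde P$ the images of $f$ and $P$ under the block inclusion $M\mapsto \mathrm{diag}(M, I_2)$ of $4\times 4$ into $6\times 6$ matrices. Since this inclusion is multiplicative and unital, the class $[P^{-1}fP]\in\pi_3 SO(6)$ is represented by $\tilde P^{-1}\tilde f\tilde P$ with $\tilde P = \mathrm{diag}(1,-1,1,1,1,1)$. The key step is to replace $\tilde P$ by $\bar P := \mathrm{diag}(1,-1,1,1,-1,1)$, inserting a compensating $-1$ in the fifth slot. Because $\tilde f$ acts as the identity on the last two coordinates, the diagonal matrix $D := \mathrm{diag}(1,1,1,1,-1,1)$ commutes with $\tilde f$ and also with $\tilde P^{-1}\tilde f\tilde P$; hence $\bar P^{-1}\tilde f\bar P = D^{-1}(\tilde P^{-1}\tilde f\tilde P)D = \tilde P^{-1}\tilde f\tilde P$. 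Crucially, $\det\bar P = (-1)(-1) = +1$, so $\bar P\in SO(6)\subset GL_+(6,\mathbb{R})$.

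Finally I would apply Corollary \ref{ABsimeqBA} (equivalently, the triviality of inner automorphisms of the path-connected group $SO(6)$ on $\pi_3$): taking $A(y)\equiv\bar P^{-1}$ and $B(y) = \tilde f(y)\bar P$, both now $GL_+(6,\mathbb{R})$-valued, the corollary gives $\bar P^{-1}\tilde f\bar P = A*B \simeq B*A = \tilde f\bar P\bar P^{-1} = \tilde f$. Combining this with the identity of the previous paragraph yields $[P^{-1}fP] = [\tilde P^{-1}\tilde f\tilde P] = [\bar P^{-1}\tilde f\bar P] = [\tilde f] = [f]$ in $\pi_3 SO(6)$, as desired.

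The conceptual heart—and the only genuinely non-formal point—is the passage from $\tilde P$ to $\bar P$. Conjugation by the determinant-$(-1)$ matrix $P$ is an outer automorphism of $SO(4)$, which may well act nontrivially on $\pi_3 SO(4)\cong\mathbb{Z}^2$, so the two classes need not agree before stabilizing. The content of the lemma is exactly that this discrepancy dies in $\pi_3 SO(6)$, and the mechanism is the absorption of the offending sign into a coordinate on which $f$ is trivial, which renders the conjugation inner (orientation-preserving). Everything else is bookkeeping.
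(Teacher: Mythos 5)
Your proof is correct, but it takes a genuinely different and more elementary route than the paper's. The paper computes the action of the outer automorphism $\alpha_0=[P^{-1}(\cdot)P]$ explicitly on the generators of $\pi_3 SO(4)\cong\mathbb{Z}\oplus\mathbb{Z}$: identifying $f$ as right quaternion multiplication, it uses the relation $[\rho]=[\sigma]+[f]$ together with Steenrod's formulas $\alpha_0([\rho])=[\rho]$, $\alpha_0([\sigma])=[\rho]-[\sigma]$, and the stable relation $[\rho]=2[\sigma]$ in $\pi_3SO(6)$, to conclude $\alpha_0([f])=[\sigma]=[f]$ there. You instead absorb the sign of $\det P$ into one of the two extra coordinates, replacing $\tilde P=\mathrm{diag}(P,I_2)$ by $\bar P=\mathrm{diag}(1,-1,1,1,-1,1)\in SO(6)$; since $\tilde f$ is the identity on those coordinates this replacement does not change the conjugate $\tilde P^{-1}\tilde f\tilde P$, and conjugation by an element of the connected group $SO(6)$ acts trivially on $\pi_3$ (your appeal to Corollary \ref{ABsimeqBA}, with the constant factor now landing in $GL_+$, is a valid implementation of this, given that $SO(6)\hookrightarrow GL_+(6,\mathbb{R})$ is a homotopy equivalence, a convention the paper itself uses). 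Your argument is softer and strictly more general: it uses nothing about the specific map $f$, applies verbatim to any $\phi:S^k\to SO(n)$ and any $P\in O(n)$ with $\det P=-1$, and already proves triviality in $\pi_3SO(5)$. What the paper's computation buys in exchange is the precise location of the stabilized class (namely $[f]=[\sigma]$, a generator of $\pi_3SO(6)$) and the fact that the action is genuinely nontrivial before stabilization --- information your soft argument does not provide, but which is not needed for the lemma as stated.
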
			
			\begin{proof}
				It is well known that the special orthogonal group $SO(4)$ admits a fiber bundle structure over $S^3$ with structure group and typical fiber $SO(3)$. This bundle is isomorphic to the trivial bundle $S^3\times SO(3)$. Consequently, we have the following isomorphism of homotopy groups:
				$$\pi_3 SO(4)\cong \pi_3S^3\oplus \pi_3SO(3)\cong \mathbb{Z}\oplus \mathbb{Z}.$$
				
				Observe that the homomorphism $\rho: S^3\rightarrow SO(3)$ generates the infinite cyclic group $\pi_3 SO(3)\cong \mathbb{Z}$, where $\rho(y)$ is given by
				$$
				\begin{aligned}
					\rho(y): ~\mathbb{R}^3&\rightarrow\mathbb{R}^3\\
					x&\mapsto \rho(y)(x):=yx\overline{y},
				\end{aligned}$$
				for $x\in \mathbb{R}^3\cong\text{Im}\mathbb{H}$. 
				In fact, $\rho$ realizes the standard double covering map from $S^3$ to $\mathbb{R}P^3\cong SO(3)$.
				Clearly, the map $\rho(y)$ extends naturally to an automorphism of $\mathbb{R}^4$ since $y\textbf{1}\overline{y}=\textbf{1}$ for all $y\in S^3\subset\mathbb{H}$. We retain the notation
				$\rho(y)$ for this extension. 

				Let $\sigma: S^3\rightarrow SO(4)$ be defined by left multiplication
				$$\begin{aligned}
					\sigma(y):~\mathbb{R}^4&\rightarrow\mathbb{R}^4\\
					x&\mapsto \sigma(y)x:=yx
				\end{aligned}$$
				for $x\in\mathbb{R}^4\cong\mathbb{H}$. 
				Then the homomorphism $\sigma$ induces an isomorphism of $\pi_3S^3$ onto a subgroup of $\pi_3SO(4)$. 
				
				
				Let $\alpha_0\in \pi_0O(4)\cong\mathbb{Z}_2$ denote the nontrivial element. The action of $\alpha_0$ on $\pi_3SO(4)$ is given by
				$\alpha_0([\phi]):=[P^{-1}\phi P]$, where $P=\text{diag} (1, -1, 1, 1)$ for $\phi: S^3\rightarrow SO(4)$.
				By \S 22.5 in \cite{Ste51}, this action acts trivially on the image of
				$\pi_3SO(3)$ in $\pi_3SO(4)$, so that $\alpha_0 ([\rho])=[\rho]$. Furthermore, formula (15) in \S 22.7 yields the identity
				$$\alpha_0 ([\sigma])=[P^{-1}\sigma P]=\alpha_0 ([\rho])-[\sigma]=[\rho]-[\sigma]~\quad \text{in}~\pi_3SO(4).$$
				By \S 23.6 in \cite{Ste51}, the inclusion-induced homomorphism $\pi_3SO(4)\rightarrow\pi_3SO(6)$ satisfies the relation $[\rho]=2[\sigma]$ in $\pi_3SO(6)$. Consequently, we obtain
				$$\alpha_0 ([\sigma])=[\sigma]~\quad \text{in}~\pi_3SO(6).$$

				Notice that the map $f : S^3 \to SO(4)$ 
				in Lemma \ref{PfP} is actually a homomorphism defined by right multiplication:
				$$\begin{aligned}
					f(y):~\mathbb{R}^4&\rightarrow\mathbb{R}^4\\
					x&\mapsto f(y)x:=x\overline{y}
				\end{aligned}$$
				for $y\in S^3\subset \mathbb{H}$ and $x\in\mathbb{R}^4\cong\mathbb{H}$. 
				It is direct to see that 
				$\rho(y)=\sigma(y)\circ f(y)$, 
				and thus
					$$[\rho]=[\sigma]+[f]~\quad \text{in}~\pi_3SO(4).$$
					Applying the action of $\alpha_0$ to both sides of the identity, we obtain
					$$[P^{-1}f P] =\alpha_0([f])=\alpha_0([\rho]-[\sigma])=[\sigma]=[f]\quad \text{in}~\pi_3SO(6).$$

				\end{proof}
				
				Let $\widetilde{D}=\left(y^t, (\mathbf{i}y)^t, (\mathbf{j}y)^t, (\mathbf{k}y)^t\right)$ as in Lemma \ref{PfP}. It follows from Lemma \ref{PfP} that $\widetilde{D}\simeq P^{-1}\widetilde{D}P$ as maps from $S^3$ to $SO(4)$ with
				$$D:= P^{-1}\widetilde{D}P=\left( \begin{array}{cccc}
					y_1 & y_2 & -y_3 & -y_4 \\
					-y_2 & y_1 & -y_4 & y_3 \\
					y_3 & y_4 & y_1 & y_2 \\
					y_4 & -y_3 & -y_2 & y_1
				\end{array}
				\right).$$
				Notice that  $D$ is the real representation of the complex matrix
				$$D' := \begin{pmatrix}
					y_1 & y_2 \\
					-y_2& y_1
				\end{pmatrix}+\sqrt{-1}  \begin{pmatrix}
					y_3 & y_4 \\
					y_4& -y_3
				\end{pmatrix}.$$
				Thus $\det\widetilde{D}=\det D=|\det D'|^2=|y|^4>0$ if $|y|>0$.
				Moreover, we observe that the matrix $K$ in Lemma \ref{K} satisfies
				$$
				\begin{pmatrix}
					I_2 &  \\
					&D 	\end{pmatrix}  K = K\begin{pmatrix}
					I_2 &  \\
					&D 	\end{pmatrix}.
				$$


				Now, we are ready to prove the extendability of $G^{-1}J_0G$ from $S^2\times\mathbb{R}^4_{\delta}$
				to $S^2\times \mathbb{R}^4$. Combining Corollaries \ref{J0K}, \ref{ABsimeqBA} and Lemma \ref{PfP} together with the facts $\det G>0$ and $\det \widetilde{D}>0$ for $|y|>0$, we obtain
				\begin{eqnarray}
					G^{-1}J_0G&\simeq& G^{-1}KG=G^{-1}\begin{pmatrix}
						I_2 &  \\
						&D 	\end{pmatrix}^{-1}K \begin{pmatrix}
						I_2 &  \\
						&D 	\end{pmatrix}G\nonumber\\
					&=&\left( \begin{pmatrix}
						I_2 &  \\
						&D 	\end{pmatrix}G\right)^{-1}K\left(\begin{pmatrix}
						I_2 &  \\
						&D 	\end{pmatrix}G\right)\label{homotopy}\\
					&\simeq&\left( \begin{pmatrix}
						I_2 &  \\
						&\widetilde{D}	\end{pmatrix}G\right)^{-1}K\left(\begin{pmatrix}
						I_2 &  \\
						&\widetilde{D}	\end{pmatrix}G\right)\nonumber\\
					&\simeq&\left( G\begin{pmatrix}
						I_2 &  \\
						&\widetilde{D}	\end{pmatrix}\right)^{-1}K\left(G\begin{pmatrix}
						I_2 &  \\
						&\widetilde{D}	\end{pmatrix}\right).\nonumber
				\end{eqnarray}	
				For the matrix $G$ appearing in \eqref{G}, one checks directly that
				$$
				G \begin{pmatrix} I_2 &\\ &\widetilde{D}  \end{pmatrix} = \left( \begin{array}{cc} H& \\ &I_3 \end{array} \right),
				$$				
				where $H=(h_1, h_2, h_3)$ is a $3\times 3$ matrix with column vectors $h_1, h_2, h_3$ with $A_{3\times 2}=\begin{pmatrix}
					h_1& h_2
				\end{pmatrix}$ as given in \eqref{G}. Explicitly,
				\begin{eqnarray*}
					h_1 &=& \frac{4|y|}{(1+|y|^2)^2(1+|t|^2)^2} \left((1-|y|^2) \begin{pmatrix}
						-2t_1 \\
						1-t_1^2+t_2^2 \\
						-2t_1 t_2
					\end{pmatrix} + 2|y| \begin{pmatrix}
						-2t_2 \\
						-2t_1 t_2 \\
						1+t_1^2-t_2^2
					\end{pmatrix} \right),\\
					h_2 &=& \frac{4|y|}{(1+|y|^2)^2(1+|t|^2)^2} \left((1-|y|^2) \begin{pmatrix}
						-2t_2 \\
						-2t_1 t_2 \\
						1+t_1^2-t_2^2
					\end{pmatrix} - 2|y| \begin{pmatrix}
						-2t_1 \\
						1-t_1^2+t_2^2 \\
						-2t_1t_2
					\end{pmatrix} \right), \\
					h_3&=&\frac{2|y|}{(1+|y|^2)(1+|t|^2)}\begin{pmatrix}
						1-|t|^2 \\
						2t_1 \\
						2t_2
					\end{pmatrix}.
				\end{eqnarray*}
				Notice that the three vectors $h_1, h_2, h_3$ are pairwise orthogonal, and
				$$|h_1|^2 = |h_2|^2 = \frac{16|y|^2}{(1+|y|^2)^2(1+|t|^2)^2}, \quad |h_3|^2 = \frac{4|y|^2}{(1+|y|^2)^2}.$$
				Express $H$ as $H=|y|\widetilde{H}$ with $\det \widetilde{H} = \frac{32}{(1+|y|^2)^3 (1+|t|^2)^3}$. Obviously, $\widetilde{H}$ is nowhere singular, even at $y=0$. Therefore, using
				$|y|$ as the homotopy parameter, we obtain a homotopy $H\simeq I_3$ for sufficiently small $|y|>0$. 
				Consequently, $G^{-1}J_0G$ can be extended to $\left(S^2\backslash\{S\}\right)\times\mathbb{R}^4$.
				
				From the arguments above, we see that the extendability does not depend on $t$, so replacing $\phi_S$ with $\phi_N$ in \eqref{phi_S} does not affect the result. 
				Hence $G^{-1}J_0G$ can be extended to $S^2\times\mathbb{R}^4$.
				Therefore, the almost complex structure $J$ can be extended smoothly to the entire $S^6$, as we hoped.
					\hfill$\Box$
				
			 \begin{rem}
			 	 The map $G$ admits no extension to $S^2\times\mathbb{R}^4$ because it is not null-homotopic. More precisely, this conclusion is deduced from the following key observation: the modified map $\begin{pmatrix}
			 	 	I_2&\\
			 	 	& D
			 	 \end{pmatrix}G$ 
			 	 is null-homotopic (as shown in the proof above), while the map $D$ itself is not null-homotopic. In fact, $D$ is the real representation of the map $D': S^3\rightarrow SU(2)$ defined by $D'(z, w)=\begin{pmatrix}
			 	 	z&w\\
			 	 	-\overline{w}& \overline{z}
			 	 \end{pmatrix}$, where $z=y_1+\sqrt{-1}y_3, w=y_2+\sqrt{-1}y_4$.  Notably, this map $D'$ represents a generator of the homotopy group $\pi_3SU(2)\cong\mathbb{Z}$.
			 \end{rem}
		 

				\section{\textbf{Non-extendability of $J$ to an integrable almost complex structure on $S^6$}}
				
				This section is devoted to proving the non-extendability of the complex structure $J$ on $S^3_{\delta}\times S^3$ to an integrable almost complex structure on $S^6$.
				
				It is well known that there exist no non-constant holomorphic functions on compact complex manifolds. So one considers only meromorphic functions. 
				Recall that the algebraic dimension $a(X)$ of a compact complex manifold $X$ is defined to be the transcendence degree of its field of meromorphic functions over $\mathbb{C}$. Thus $a(X)=0$ if and only if every meromorphic function on $X$ is constant. A surprising result of Campana, Demailly and Peternell (\cite{CDP98, CDP20}) states:
				\vspace{2mm}
				
				\noindent
				\textbf{Theorem (Campana-Demailly-Peternell)}. \textit{Let $X$ be a compact complex manifold homeomorphic to $S^6$. Then $a(X) = 0$.}
				\vspace{2mm}
				
				Hence every meromorphic function on a hypothetical complex manifold $S^6$  must be constant.
				For compact complex manifolds with no non-constant meromorphic functions, Krasnov \cite{Kra75} established a strong estimate:
				\vspace{2mm}
				
				\noindent
\textbf{Theorem (Krasnov).} \textit{Let $X$ be a connected compact complex manifold of complex dimension 
	$n$ with no non-constant meromorphic functions. Then the number of irreducible hypersurfaces in $X$ is finite, and not greater than $n+h^{1,1}-h^{1,0}$, where $h^{p,q}=\dim H^q(X, \Omega_X^p)$ is the Hodge number of the complex manifold $X$, and $ \Omega_X^p$ is the sheaf of holomorphic $p$-forms.}
\vspace{2mm}

				Suppose now that the complex structure $J$ we defined on $S^3_{\delta} \times S^3$ could be extended smoothly to a complex structure on $S^6$. Denote by $S^6_J$ the resulting compact complex manifold. 
				From Krasnov's theorem, it follows that $S^6_J$ can contain only finitely many closed complex hypersurfaces.

				However, $S^3_{\delta} \times S^3\subset S^6_J$ already contains infinitely many distinct closed complex hypersurfaces. To see this, 
				consider the free $S^1$-action on $S^3\subset\mathbb{C}^2$:
				\begin{equation*}
					\begin{aligned}
						S^1\times S^3~&\rightarrow S^3\\
						e^{i\theta}, (z, w)&\mapsto e^{i\theta}(z, w):=(e^{i\theta}z, e^{i\theta}w).
					\end{aligned}
				\end{equation*}
				Its orbit space is $S^3\big/S^1\cong \mathbb{C}P^1\cong S^2$.
				For a base point $(z_0, w_0)\in S^3$, we call its orbit $\{e^{i\theta}(z_0, w_0)~|~\theta\in\mathbb{R}\}$ a \textit{standard circle} in $S^3$.
				In this manner, the Hopf manifold 
				\begin{equation}\label{cplx hyp}
					S^1\times S^3:=\left\{\left( e^{i\theta}(z_0, w_0), ~v\right)~|~\theta\in\mathbb{R}, v\in S^3\right\}
				\end{equation}
				is a complex hypersurface in $S^3\times S^3$. 

				Observe that for every $\varphi$ with $\cos\varphi\in(0, \delta)$, the point $p_{\varphi}=(\cos\varphi, 0, \sin\varphi, 0)$ lies in $S^3_{\delta}$, and its entire orbit
				$$\mathcal{C}_{p_{\varphi}}=\left\{ e^{i\theta}p_{\varphi}~|~\theta\in\mathbb{R}\right\}$$
				is a standard circle contained in $S^3_{\delta}$. Since there are infinitely many such standard circles, 
				we obtain infinitely many distinct closed complex hypersurfaces $\mathcal{C_{p_{\varphi}}}\times S^3$ in $S^3_{\delta}\times S^3\subset S^6_J$.
				This contradicts Krasnov's finiteness theorem. 
				
				Consequently, the complex structure $J$ on $S^3_{\delta} \times S^3$ cannot be extended to an integrable almost complex structure on $S^6$, as we claimed.
					\hfill $\Box$
				
				\begin{rem}\label{rem3.1}
				 For $\delta\in(0, 1)$, define $B^3_{\delta}:=\{x=(x_1, x_2, x_3, x_4)\in S^3~|~-\delta<x_1<\delta\}$. 
				 Observe that the previously constructed $\mathcal{C}_{p_{\varphi}} \times S^3$ is actually contained in $B^3_\delta \times S^3$.
				 As $\delta$ approaches $0$, the measure of $B^3_\delta$ (and hence of $B^3_\delta \times S^3$) becomes arbitrarily small, while $B^3_\delta \times S^3$ remains a connected open subset of $S^6$.
				 Therefore, even if we replace $S^3_\delta \times S^3$ with such an open subset $B^3_\delta \times S^3$ of arbitrarily small measure, the conclusion of Theorem \ref{thm} continues to hold, as noted in Remark \ref{rem1.1}.
				\end{rem}

				\section{\textbf{The proof of Proposition \ref{prop}}}
				
We first show that $M$ admits an almost complex structure as we desired. 	
				Recall that for an oriented smooth manifold $M^{2n}$, all the obstructions to the existence of an almost complex structure inducing the given orientation lie in the cohomology groups
				\begin{equation}\label{obs}
				H^{k+1}(M, \pi_k(\mathrm{SO}(2n)/\mathrm{U}(n))), \qquad k = 1, 2, \ldots, 2n-1.
				\end{equation}
				In our situation, $n=2$ and $\mathrm{SO}(4)/\mathrm{U}(2)\cong S^2$. 
				Since $\pi_1(S^2) = 0$, it suffices to examine the groups $H^{k+1}(M, \pi_k(S^2))$ for $k=2, 3$. It is known that there exists an isoparametric foliation on $\mathbb{C}P^n$ with two focal submanifolds---$\mathbb{C}P^{n-1}$ and a point $\{p\}$ (see, for example, \cite{GTY15}).
				Thus, $\mathbb{C}P^2$ admits an isoparametric foliation with focal submanifolds $S^2$ and a point $\{p\}$. According to \cite{GT13, GT14}, analogous to the case of spheres, there also exists a decomposition of a Riemannian manifold into two disc bundles over the two focal submanifolds. Taking $D^4$ as a closed disc neighborhood of $p$, we see that $M=\mathbb{C}P^2 \setminus D^4$ is homeomorphic to the total space of a real rank-$2$ vector bundle over $S^2$. Consequently, $M\simeq S^2$. Therefore, all the relevant cohomology groups vanish, and $M$ admits an almost complex structure compatible with any given orientation.

				Let us now fix an almost complex structure $J_0$ on $M$ that induces the negative orientation.
				By the Gromov-Landweber theorem, 
				$J_0$ can be deformed to a complex structure $J$ on $M$, which also induces the negative orientation.
				Suppose that $J$ could be extended smoothly to an almost complex structure on $\mathbb{C}P^2$.
				The extension would then endow $\mathbb{C}P^2$ with the negative orientation, making it $\overline{CP^{2}}$. However, the counter-intuitive results of Kahn \cite{Kah69} and Tang-Zhang \cite{TZ02} assert that
				$\overline{CP^{2n}}$  admits no almost complex structure, a contradiction.  
\hfill$\Box$
\begin{rem}
	\label{rem:s6_uniqueness}
	While \eqref{obs} concerns existence, the uniqueness of almost complex structures on $S^6$ is governed by difference classes in $H^k(S^6; \pi_k(\mathbb{C}P^3))$, identifying the fiber $SO(6)/U(3)$ with $\mathbb{C}P^3$. These classes vanish for all $1 \le k \le 6$: trivially for $k < 6$ due to the base cohomology, and for $k=6$ since $\pi_6(\mathbb{C}P^3) = 0$. Consequently, the space of almost complex structures compatible with a given orientation is connected.
\end{rem}

	\begin{ack}
The authors would like to express their sincere gratitude to Professors Yum-Tong Siu, Fangyang Zheng, and Kefeng Liu for their insightful comments and valuable suggestions.
	\end{ack}

		\end{document}